\theoremstyle{plain}
\newtheorem{theorem}{Theorem}[section]
\theoremstyle{definition}
\newtheorem*{definition}{Definition}
\theoremstyle{remark}
\newcommand{\calG}{\mathcal{G}}
\newcommand{\calT}{\mathcal{T}}
\newcommand{\xhom}{\text{xhom}}
\title{Maximizing and minimizing the number of generalized colorings of trees}
\author{John Engbers\thanks{john.engbers@marquette.edu; Department of Mathematics, Statistics and Computer Science, Marquette University, Milwaukee, WI 53201. Research supported by the Simons Foundation grant 524418.}  \and Christopher Stocker\thanks{christopher.stocker@marquette.edu; Department of Mathematics, Statistics and Computer Science, Marquette University, Milwaukee, WI 53201.} }
\date{\today}
\begin{document}

\maketitle

\begin{abstract}
We classify the trees on $n$ vertices with the maximum and the minimum number of certain generalized colorings, including conflict-free, odd, non-monochromatic, star, and star rainbow vertex colorings. We also extend a result of Cutler and Radcliffe on the maximum and minimum number of existence homomorphisms from a tree to a completely looped graph on $q$ vertices.  
\end{abstract}



\section{Introduction}

Let $G = (V(G),E(G))$ be a simple graph, and let $c:V(G) \to \{1,2,3\ldots\}$ be a coloring of the vertices of $G$. A \emph{proper coloring} of $G$ is a coloring so that no edge of $G$ is monochromatic. When $q$ colors are used (i.e. $c:V(G) \to \{1,\ldots,q\}$) we will often refer to a proper coloring as a \emph{proper $q$-coloring}. 

Other types of vertex colorings have recently been investigated.  One such variation places the allowable colors as the vertices in a graph $H$ and joins two vertices of $H$ with an edge if those colors can appear across an edge in $G$.  For a given $H$, an \emph{$H$-coloring of $G$}, or \emph{graph homomorphism from $G$ to $H$}, is a coloring of $G$ using the scheme from the graph $H$; more precisely, an $H$-coloring of $G$ is a function $f: V(G) \to V(H)$ so that if $v,w \in V(G)$ with $vw \in E(G)$, then $f(v)f(w) \in E(H)$. Notice that when $H=K_q$, the complete graph on $q$ vertices, an $H$-coloring of $G$ corresponds to a proper $q$-coloring of $G$. When $H$ is an edge with one looped endvertex, an $H$-coloring of $G$ corresponds to an \emph{independent set}, or \emph{stable set}, in $G$.

Finding an $H$-coloring of a graph $G$ can be difficult, and so much recent research has investigated a related extremal problem: \emph{Given a family of graphs $\calG$, which $G \in \calG$ has the largest (smallest, respectively) number of $H$-colorings?} An answer to this question produces bounds on the number of $H$-colorings for any graph in $\calG$, and also implies bounds on the probability that a random coloring of the vertices of $G \in \calG$ from the vertices of $H$ will be an $H$-coloring of $G$. 
Some families $\calG$ that have been considered include regular graphs, graphs with fixed minimum degree, and graphs with a fixed number of edges. For results and conjectures on the extremal $H$-coloring question in these families, we refer the reader to two surveys, \cite{Cutler} and \cite{Zhao}, and the numerous references therein.  

One specific family that will be applicable in this paper is the family of all $n$-vertex trees, which will be denoted by $\calT(n)$.  Extremal independent set counts in trees were first studied in \cite{ProdingerTichy}, while extremal $H$-coloring counts in trees for all other $H$ have also been considered \cite{CsikvariLin,EngbersGalvin,Sidorenko}. In particular, for any $H$ the star is always the tree with the largest number of $H$-colorings, but interestingly the path is not always the tree with the smallest number of $H$-colorings.  See \cite{CsikvariLin} for more details, and \cite{EngbersGalvin} for a class of $H$ such that the path is the tree with the smallest number of $H$-colorings. Note that every tree on $n$ vertices has the same number of proper $q$-colorings. 

We may view $H$-colorings as placing a restriction on the colors that can appear across an edge in $G$. Other vertex coloring schemes have also recently been considered, arising from generalizing proper $q$-colorings by using color restrictions on other subsets of $V(G)$ (and not exclusively on pairs of vertices in $E(G)$). Two natural subsets of vertices to use for color restrictions are $N(v)$ and $N[v]$, which are the neighborhood and closed neighborhood, respectively, of a vertex $v \in V(G)$. The extremal question then becomes the following: \emph{Given a family of graphs $\calG$ and a set of color restrictions, which $G \in \calG$ has the largest (smallest, respectively) number of colorings?} Extremal results for colorings of regular graphs with restricted lists on $N(v)$ and $N[v]$ appear in \cite{CutlerRadcliffe}. 

In this paper, we will focus on this extremal question for a number of types of colorings in the family of $n$-vertex trees $\calT(n)$. 
Let $P_n$ and $S_n$ in $\calT(n)$ denote the path and star on $n$ vertices, respectively. 

\section{Definitions and Statements of Results}\label{sec-thms}

In this section we define various types of colorings and state the corresponding extremal results for those colorings, and in Section \ref{sec-proofs} we provide the proofs of these extremal results. We start with the notion of a conflict-free coloring.

\begin{definition}
A \emph{conflict-free coloring} of a graph $G$ with $q$ colors is a function $c:V(G) \to \{1,\ldots,q\}$ such that for every $v \in V(G)$ there is a color occurring exactly once in $N[v]$. The number of conflict-free colorings of $G$ with $q$ colors is denoted $\chi_{cf}(G;q)$.
\end{definition}

This definition of conflict-free colorings of a graph is a special case of conflict-free colorings of a hypergraph, which was introduced in \cite{NP,SmorThesis}.  Considering conflict-free colorings with restrictions on paths instead of closed neighborhoods was studied in \cite{CheilarisToth}. Finding the minimum number of colors needed to admit a conflict-free coloring, even for disks in the plane, is NP-complete \cite{NP}.   

One application of conflict-free colorings occurs in frequency assignments for cellular networks. Here, the vertices represent base stations and the colors represent frequencies assigned to the base stations. For a client to receive a signal from a base station, they must tune to the signal from some (nearby) base station, and they require that signal to come from only one station in order to avoid signal interference. By representing spatially close base stations via edge adjacency, this frequency assignment problem is modeled by a conflict-free coloring of the associated graph.  For a survey of conflict-free colorings and its applications, see \cite{Smor}.

We determine the trees with the extremal number of conflict-free colorings.

\begin{theorem}\label{thm-conffree}
Let $q \geq 2$ and $T \in \calT(n)$. Then
\[
\chi_{cf}(S_n;q) \leq \chi_{cf}(T;q) \leq \chi_{cf}(P_n;q).
\]
Equality occurs in the upper bound if and only if $T=P_n$.

When $q>2$, equality occurs in the lower bound if and only if $T=S_n$. When $q=2$, equality occurs in the lower bound if and only if $T$ does not contain a non-trivial subtree $T'$ with the property that each vertex in $T'$ has exactly one neighbor outside of $T'$. 
\end{theorem}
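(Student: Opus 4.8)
The plan is to reduce everything to counting the \emph{non-proper} conflict-free colorings. The key starting observation is that every proper $q$-coloring is conflict-free: if no edge is monochromatic, then at each $v$ the color $c(v)$ is the unique color occurring in $N[v]$. Since every tree on $n$ vertices has exactly $q(q-1)^{n-1}$ proper $q$-colorings, and since one checks directly that every conflict-free coloring of $S_n$ is in fact proper (each leaf must differ from the center, which forces the center's color to occur exactly once), we get $\chi_{cf}(S_n;q)=q(q-1)^{n-1}$ and
\[
\chi_{cf}(T;q) = q(q-1)^{n-1} + \eta_q(T),
\]
where $\eta_q(T)$ denotes the number of conflict-free colorings of $T$ having at least one monochromatic edge. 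The lower bound $\chi_{cf}(S_n;q)\le\chi_{cf}(T;q)$ is then immediate from $\eta_q(T)\ge 0$, and both equality statements become the question of when $\eta_q(T)=0$.

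To analyze $\eta_q$, I would study the structure of a non-proper conflict-free coloring $c$. Let $T'$ be a connected component (necessarily a non-trivial subtree) of a color class that contains a monochromatic edge; then every $v\in T'$ has all of its $T'$-neighbors of its own color, while every neighbor outside $T'$ has a different color. Since $v$'s color already occurs at least twice in $N[v]$, the conflict-free condition forces some \emph{other} color to occur exactly once in $N[v]$, i.e.\ among the neighbors of $v$ lying outside $T'$. When $q=2$ that other color must itself be the unique one, so $v$ has \emph{exactly one} neighbor outside $T'$; this is precisely the forbidden subtree in the statement, giving necessity. For sufficiency when $q=2$, given such a $T'$ I would color $T'$ with one color and color each remaining vertex according to the parity of its distance to $T'$, and verify this is a non-proper conflict-free coloring. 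When $q\ge 3$ the condition is far weaker, and from any path $v_1v_2v_3v_4$ (which exists precisely when $T$ is not a star) I would build a non-proper conflict-free coloring by setting $c(v_2)=c(v_3)$, giving each of $v_2,v_3$ a privately colored neighbor on its far side, and using a third color on all remaining neighbors to keep those private colors unique; together with the fact that $S_n$ admits only proper colorings, this yields the stated $q$-dependent characterizations.

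For the upper bound I would first compute $\chi_{cf}(P_n;q)$ exactly. Reading a coloring of $P_n$ as maximal runs of equal colors, it is conflict-free if and only if every run has length at most $2$ and the two end runs have length exactly $1$ (the condition at an internal degree-$2$ vertex fails only on three equal consecutive colors, and a leaf forbids a length-$\ge 2$ end run); this gives a linear recurrence, hence a closed form. For a general tree I would root $T$ and run a dynamic program in which each rooted subtree is summarized by a pair $(A,B)$, where $A$ (resp.\ $B$) counts its conflict-free colorings in which the root receives the same color as (resp.\ a different color from) its virtual parent, a leaf contributing $(0,1)$. I would then introduce a \emph{straightening} operation that takes a vertex of degree at least $3$ and reattaches one of its branches to the end of a longest path through it, moving $T$ toward $P_n$, and show via the $(A,B)$-recursion that this never decreases $\chi_{cf}$ and strictly increases it unless $T=P_n$, yielding both the upper bound and its equality case.

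The main obstacle is the coupling at the root in this recursion. Because the conflict-free condition at the root depends on the entire multiset of colors on its children (whether \emph{some} color occurs exactly once), the rule producing $(A,B)$ from the children's pairs is not a simple product but involves symmetric quantities recording how many children repeat the root's color and how the remaining colors collide. Establishing this transformation exactly, and then proving the monotonicity inequality for the straightening operation (with the strictness needed for uniqueness), is where essentially all of the work for the upper bound lies; by contrast, the lower bound and its equality cases follow cleanly from the reformulation via $\eta_q$ and the monochromatic-subtree analysis.
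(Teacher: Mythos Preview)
Your treatment of the lower bound and its equality cases is correct and essentially identical to the paper's: reduce to the existence of a non-proper conflict-free coloring, analyze the monochromatic components when $q=2$, and for $q\ge 3$ build a non-proper conflict-free coloring off any $P_4$. The paper's $q\ge 3$ construction is the same idea (color the inner two vertices of a $P_4$ with color $1$, the outer two with color $2$, all their other neighbors with color $3$, then extend properly), and the $q=2$ characterization via ``every vertex of the monochromatic subtree has exactly one outside neighbor'' is exactly what the paper proves.

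The upper bound, however, has a genuine gap. You correctly identify the obstruction yourself: the pair $(A,B)$ does \emph{not} determine how subtrees combine at a branching vertex, because the conflict-free condition at a vertex $v$ depends on the full multiset of colors on $N[v]$, not merely on how many children agree with $v$. So the ``$(A,B)$-recursion'' you intend to feed into the straightening argument does not exist in the form stated, and you do not supply the richer state space or the monotonicity inequality that would be needed. As written, the upper-bound argument is a plan with its central step missing.

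The paper sidesteps this difficulty entirely with a short induction on $n$. Take a penultimate vertex $v$ on a longest path, with pendant neighbors $u_1,\dots,u_\ell$ and non-pendant neighbor $w$, and split according to whether $c(v)=c(w)$. If $c(v)\neq c(w)$, deleting $u_1,\dots,u_\ell$ leaves a conflict-free coloring of $T_{n-\ell}$, and each $u_i$ had at most $q-1$ choices; if $c(v)=c(w)$, deleting $u_1,\dots,u_\ell,v$ leaves a conflict-free coloring of $T_{n-\ell-1}$, and again there are at most $(q-1)^\ell$ ways to put back the deleted vertices. This yields
\[
\chi_{cf}(T_n;q)\ \le\ (q-1)^\ell\,\chi_{cf}(T_{n-\ell};q)\ +\ (q-1)^\ell\,\chi_{cf}(T_{n-\ell-1};q),
\]
and the same case split on $P_n$ (conditioning on whether $v_{\ell+1}$ and $v_{\ell+2}$ agree) gives the matching recurrence with equality when $\ell=1$ and strict inequality when $\ell\ge 2$. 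Induction then finishes both the bound and the uniqueness. This avoids any need to track multisets of child colors; the only information used is whether $v$ matches its one non-leaf neighbor.
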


We next define two colorings that are related to conflict-free colorings.
\begin{definition}
An \emph{odd coloring} of a graph $G$ with $q$ colors is a function $c:V(G) \to \{1,\ldots,q\}$ such that for each $v \in V(G)$ there is a color occurring an odd number of times in $N[v]$. The number of odd colorings of $G$ with $q$ colors is denoted $\chi_{odd}(G;q)$.
\end{definition}

\begin{definition}
A \emph{star rainbow coloring} of a graph $G$ with $q$ colors is a function $c:V(G) \to \{1,\ldots,q\}$ such that for each $v \in V(G)$ no color occurs more than once in $N[v]$. The number of star rainbow colorings of $G$ with $q$ colors is denoted $\chi_{sr}(G;q)$.
\end{definition}

Odd colorings were first introduced in \cite{CKP}. It is clear that every star rainbow coloring of $G$ is a conflict-free coloring of $G$, and every conflict-free coloring of $G$ is an odd coloring of $G$. We determine the trees with the extremal number of odd and star rainbow colorings. 

\begin{theorem}\label{thm-odd}
Let $q \geq 2$ and $T \in \mathcal{T}(n)$. Then 
\[
\chi_{odd}(S_n;q) \leq \chi_{odd}(T;q) \leq \chi_{odd}(P_n;q).
\]
Equality occurs in the upper bound if and only if $T=P_n$.

When $q>2$, equality occurs in the lower bound if and only if $T=S_n$. When $q=2$, equality occurs in the lower bound if and only if $T$ contains at most one vertex of even degree. 
\end{theorem}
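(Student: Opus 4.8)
The starting point is a parity observation: if $\deg(v)$ is even then $|N[v]|=\deg(v)+1$ is odd, so the color multiplicities in $N[v]$ cannot all be even, and the odd condition at $v$ holds for \emph{every} coloring. Thus only odd-degree vertices impose constraints, and at a leaf $\ell$ with neighbor $p$ the constraint is simply $c(\ell)\ne c(p)$. From this one computes directly that $\chi_{odd}(P_n;q)=q^{n-2}(q-1)^2$ (only the two endpoints are constrained) and $\chi_{odd}(S_n;q)=q(q-1)^{n-1}$ (each leaf differs from the center, after which the center's own color occurs exactly once and its constraint is automatic). I would also record that every proper $q$-coloring is odd---under a proper coloring each vertex's own color is the unique occurrence in its closed neighborhood---and that every tree has exactly $q(q-1)^{n-1}$ proper colorings. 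Since this equals $\chi_{odd}(S_n;q)$, the lower bound $\chi_{odd}(T;q)\ge\chi_{odd}(S_n;q)$ is immediate, and the odd colorings of $S_n$ are precisely its proper colorings.

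For the upper bound, suppose first $T\ne S_n$ (the cases $n\le 3$ being trivial); then $T$ has two leaves $\ell_1,\ell_2$ with distinct neighbors $p_1,p_2$, and one checks $\ell_1,p_1,\ell_2,p_2$ are four distinct vertices. Keeping only the constraints $c(\ell_1)\ne c(p_1)$ and $c(\ell_2)\ne c(p_2)$, which involve disjoint pairs, gives $\chi_{odd}(T;q)\le (q^2-q)^2q^{n-4}=q^{n-2}(q-1)^2=\chi_{odd}(P_n;q)$; the case $T=S_n$ follows from the explicit formulas. For equality I would argue that it forces these two leaf constraints to be the \emph{only} constraints, i.e.\ exactly two odd-degree vertices, which forces $T=P_n$; if $T\ne P_n$ a third leaf supplies a coloring meeting the two chosen constraints but violating oddness there, so the bound is strict.

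It remains to treat equality in the lower bound, which holds exactly when every odd coloring of $T$ is proper. For $T=S_n$ any non-proper coloring sets some leaf equal to the center, and that leaf immediately fails, so equality holds. When $q>2$ and $T\ne S_n$, the tree has two adjacent non-leaf vertices $u,v$ (any tree other than a star does); I would color $u$ and $v$ with a common color and complete to an odd coloring, using a third color on a further neighbor of $u$ (and of $v$) to witness an odd multiplicity there, producing a non-proper odd coloring. Hence for $q>2$ equality holds if and only if $T=S_n$.

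The case $q=2$ is where the real work lies, and the plan is to linearize. Writing colors in $\mathbb{F}_2$, the odd condition at an odd-degree vertex $v$ becomes $\sum_{u\in N[v]}x_u=1$, while even-degree vertices are unconstrained; so the odd $2$-colorings are the solutions of an affine system over $\mathbb{F}_2$ whose matrix $A_{\mathrm{odd}}$ consists of the closed-neighborhood rows of $M=A(T)+I$ indexed by the odd-degree vertices. The system is consistent (proper $2$-colorings are solutions), so the number of odd colorings equals $2^{\dim\ker A_{\mathrm{odd}}}$. The crux is the lemma that $M$ is nonsingular over $\mathbb{F}_2$ precisely when $T$ has an even-degree vertex, with $\ker M=\langle\mathbf{1}\rangle$ when every degree is odd; this can be proved by induction on leaves or via the matching-polynomial identity reduced mod $2$. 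Since $\ker A_{\mathrm{odd}}=M^{-1}(W)$, where $W$ is the span of the even-degree coordinate vectors, the lemma yields $\dim\ker A_{\mathrm{odd}}=\max(1,k)$ with $k$ the number of even-degree vertices, so $\chi_{odd}(T;2)=2^{\max(1,k)}$. This equals $2=\chi_{odd}(S_n;2)$ exactly when $k\le1$, which is the stated characterization; it also re-proves the $q=2$ cases of both bounds, since $2^{\max(1,k)}$ is monotone in $k$ and $k\le n-2$ with equality iff $T=P_n$. The main obstacle is establishing this $\mathbb{F}_2$ nonsingularity lemma cleanly.
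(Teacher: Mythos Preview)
Your treatment of the upper bound and of the $q>2$ equality case is essentially the paper's: the same parity observation (even-degree vertices are automatically satisfied), the leaf constraint $c(\ell)\ne c(p)$ giving $\chi_{odd}(T;q)\le q^{n-k}(q-1)^k$ for $k$ leaves (you use two leaves with distinct neighbors, the paper uses all leaves, but the effect is the same), and a non-proper odd coloring built from two adjacent internal vertices when $q\ge 3$ (the paper phrases this as coloring a $P_4$ and extending properly). These parts are fine and match the paper closely.

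Where you genuinely diverge is the $q=2$ equality analysis. The paper does \emph{not} linearize: it gives a two-line combinatorial argument in each direction. If $T$ has two even-degree vertices, choose such a pair $v,w$ at minimum distance (so every internal vertex of the $v$--$w$ path has odd degree), color that entire path with colour~$1$, and extend properly; one checks directly that this is a non-proper odd $2$-coloring. Conversely, if $T$ has at most one even-degree vertex and $c$ is a non-proper $2$-coloring, take a maximal monochromatic path; one of its endpoints $v$ has odd degree, hence $|N[v]|$ is even, and by maximality exactly two vertices of $N[v]$ carry $c(v)$, forcing the other colour to appear an even number of times as well --- contradicting oddness at $v$. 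That is the whole $q=2$ argument in the paper.

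Your $\mathbb{F}_2$ route is more structural and actually yields more --- the exact count $\chi_{odd}(T;2)=2^{\max(1,k)}$, which the paper never states --- but it rests on the lemma that $A(T)+I$ is nonsingular over $\mathbb{F}_2$ iff $T$ has an even-degree vertex (with kernel $\langle\mathbf{1}\rangle$ otherwise). You correctly flag this as the main obstacle; it is true (equivalently, the total number of matchings of a tree is even iff all degrees are odd), and an induction deleting a leaf and its neighbor does go through, but it is real work. The paper's approach sidesteps this entirely: if all you need is the equality characterization rather than the exact count, the maximal-monochromatic-path argument is shorter and self-contained.
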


\begin{theorem}\label{thm-rainbowcols}
Let $q \geq 2$ and $T \in \calT(n)$.  Then 
\[
\chi_{sr}(S_n;q) \leq \chi_{sr}(T;q) \leq \chi_{sr}(P_n;q).
\]
Equality occurs in the lower bound if and only if $T=S_n$ and in the upper bound if and only if $T=P_n$.
\end{theorem}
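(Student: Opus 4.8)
The plan is to base everything on a single deletion recursion for $\chi_{sr}$ and then run two short inductions. Let me first record the engine. Fix a leaf $\ell$ of $T$ with neighbour $v$, and set $T' = T - \ell$. Restricting any star rainbow coloring of $T$ to $T'$ yields a star rainbow coloring of $T'$, since deleting $\ell$ only shrinks closed neighbourhoods. Conversely, given a star rainbow coloring $c'$ of $T'$, extending it to $T$ only adds the constraint that $c(\ell)$ avoid the colors on $N_{T'}[v] = N_T[v]\setminus\{\ell\}$ (the closed neighbourhoods of all other vertices are unaffected by adding $\ell$). Because $c'$ is already rainbow on $N_{T'}[v]$, exactly $q - |N_{T'}[v]| = q - \deg_T(v)$ colors are available, a number that does not depend on the particular $c'$. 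This yields the clean recursion
\[
\chi_{sr}(T;q) = (q - \deg_T(v))\,\chi_{sr}(T - \ell;q),
\]
and I regard proving this order-independence (the count of extensions being the same for every star rainbow coloring of $T'$) as the main step; once it is in hand the theorem falls out quickly.

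Iterating the recursion along any leaf-removal order gives the closed forms I will use: $\chi_{sr}(P_n;q) = q(q-1)(q-2)^{n-2}$ and $\chi_{sr}(S_n;q) = q(q-1)\cdots(q-n+1)$, the latter because the star's centre forces every vertex to receive a distinct color. More generally the recursion produces the degree-sequence product $\chi_{sr}(T;q) = q(q-1)\prod_{v}\prod_{i=2}^{\deg(v)}(q-i)$, which I may invoke to see at a glance that every factor is nonnegative and that $\chi_{sr}(T;q)>0$ exactly when $\Delta(T)\le q-1$.

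For the upper bound I would induct on $n$. Choosing any leaf $\ell$ with neighbour $v$, note that $\deg_T(v)\ge 2$ whenever $n\ge 3$, so $q-\deg_T(v)\le q-2$; since $\chi_{sr}(T-\ell;q)\ge 0$ and $T-\ell$ has $n-1$ vertices, the recursion and the inductive hypothesis give $\chi_{sr}(T;q) = (q-\deg_T(v))\chi_{sr}(T-\ell;q) \le (q-2)\chi_{sr}(P_{n-1};q) = \chi_{sr}(P_n;q)$. In the range where $\chi_{sr}(P_{n-1};q)>0$, equality forces both $\deg_T(v)=2$ and $T-\ell=P_{n-1}$; then $v$ is an endpoint of $P_{n-1}$ and attaching $\ell$ merely lengthens the path, so $T=P_n$.

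For the lower bound I would argue directly: any injective coloring $V(T)\to\{1,\dots,q\}$ is automatically a star rainbow coloring, and there are exactly $q(q-1)\cdots(q-n+1)=\chi_{sr}(S_n;q)$ of them, whence $\chi_{sr}(T;q)\ge \chi_{sr}(S_n;q)$. If $T\neq S_n$ then $T$ has diameter at least $3$, so it contains two vertices at distance $\ge 3$; coloring that pair with one common color and the remaining vertices distinctly produces a non-injective star rainbow coloring, so the inequality is strict. The main obstacle, beyond the recursion itself, is the bookkeeping in the equality cases: the characterizations are cleanest when the common extremal value is positive, and I would treat the small-$q$ degeneracies (where several trees share the value $0$) explicitly, using that positivity of $\chi_{sr}(P_n;q)$ (respectively of the number of injective colorings) is precisely what pins down $P_n$ (respectively $S_n$).
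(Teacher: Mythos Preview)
Your proposal is correct and follows essentially the same approach as the paper. Both arguments rest on the same engine---the paper's ``iterative coloring starting from a leaf'' is exactly your leaf-deletion recursion $\chi_{sr}(T;q)=(q-\deg_T(v))\,\chi_{sr}(T-\ell;q)$ read in reverse, and the paper's product formula after the proof is the unrolled version of your recursion; for the lower bound you and the paper both compare against injective colorings and exhibit a non-injective star rainbow coloring when $T$ contains a $P_4$ (equivalently, two vertices at distance $\ge 3$). Your write-up is slightly more formal, and you are right to flag the small-$q$ degeneracies in the equality characterizations (e.g.\ $q=2$, $n\ge 3$, where every tree has $\chi_{sr}=0$), which the paper's statement and proof gloss over.
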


Next we define a non-monochromatic coloring, and give the corresponding extremal result for trees.

\begin{definition}
A \emph{non-monochromatic coloring} of a graph $G$ with $q$ colors is a function $c:V(G) \to \{1,\ldots,q\}$ such that for each $v \in V(G)$ there are at least two colors occurring in $N[v]$. The number of non-monochromatic colorings of $G$ with $q$ colors is denoted $\chi_{nm}(G;q)$.
\end{definition}

\begin{theorem}\label{thm-nm}
Let $q \geq 2$ and $T \in \calT(n)$.  Then 
\[
\chi_{nm}(S_n;q) \leq \chi_{nm}(T;q) \leq \chi_{nm}(P_n;q).
\]
Equality occurs in the lower bound if and only if $T=S_n$ and in the upper bound if and only if $T=P_n$.
\end{theorem}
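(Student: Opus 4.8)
The plan is to reduce the whole-tree count to a monotone recursion on rooted subtrees and then compare trees through that recursion. Root $T$ at any vertex $r$. For a rooted tree $(T,r)$ every non-root vertex $u$ has $N_T[u]\subseteq V(T)$, so its non-monochromatic condition is internal to $T$; only the condition at $r$ can reach the (eventual) parent. Accordingly, fix a root color and let $a(T,r)$ count the colorings of $T$ in which every non-root vertex satisfies the non-monochromatic condition and $N_T[r]$ is \emph{not} monochromatic, and let $b(T,r)$ be the analogous count in which $N_T[r]$ \emph{is} monochromatic; by color-symmetry neither depends on the chosen root color. Since at the true root there is no parent to help, $\chi_{nm}(T;q)=q\cdot a(T,r)$ for every $r$ (in particular $a(T,r)$ is root-independent). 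If $r$ has children with subtrees $T_1,\dots,T_d$ and I write $a_j=a(T_j,\cdot)$, $b_j=b(T_j,\cdot)$, a short case analysis on whether each child agrees in color with $r$ gives
\[
a(T,r)=\prod_{j=1}^{d}\bigl(q\,a_j+(q-1)b_j\bigr)-\prod_{j=1}^{d}a_j,\qquad b(T,r)=\prod_{j=1}^{d}a_j,
\]
with base case $a=0,\ b=1$ at a single vertex. For the path this collapses to $a_m=(q-1)(a_{m-1}+a_{m-2})$, and for the star to $a=(q-1)^{n-1}$, recovering $\chi_{nm}(S_n;q)=q(q-1)^{n-1}$.

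The engine of both bounds is a monotonicity lemma. Writing $S_j=q\,a_j+(q-1)b_j\ge a_j\ge 0$, the partials $\partial a/\partial a_j=q\prod_{k\neq j}S_k-\prod_{k\neq j}a_k$ and $\partial a/\partial b_j=(q-1)\prod_{k\neq j}S_k$ are nonnegative (using $S_k\ge a_k\ge0$ and $q\ge1$), and $b=\prod_j a_j$ is visibly nondecreasing in the $a_j$. Hence replacing any child subtree by one with componentwise larger $(a_j,b_j)$ cannot decrease $a(T,r)$ or $b(T,r)$, which lets me compare whole trees by comparing the pairs $(a,b)$ of their rooted subtrees.

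With monotonicity in hand I would prove the key \emph{extremizer lemma} by induction on $m$: among rooted trees on $m$ vertices, the path rooted at an endpoint simultaneously maximizes $a$ and $b$, while the star rooted at its center simultaneously minimizes $a$ and $b$. In the inductive step, monotonicity lets me take each child (by induction) to be the extremal rooted path or star, which reduces the question to optimizing over the branching profile at the root, i.e.\ the number $d$ of children and their sizes $m_1,\dots,m_d$. Substituting the path, resp.\ star, values turns $a(T,r)$ into an explicit function of the profile; for the maximum it is extremal exactly on the profiles that reassemble into $P_m$ (a single path-child, or two path-children), and for the minimum the relevant comparison---that splitting a child into two leaves strictly lowers $\prod_j S_j$---forces the star profiles (all leaves, or a single star-child), both reassembling into $S_m$. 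Finally $\chi_{nm}(T;q)=q\,a(T,r)$ transfers these rooted extremes to $\chi_{nm}(S_n;q)\le\chi_{nm}(T;q)\le\chi_{nm}(P_n;q)$.

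The main obstacle is this extremizer lemma rather than the recursion or monotonicity. Two points need care. First, the induction is genuinely two-dimensional: because monotonicity uses both coordinates, I must verify that the \emph{same} rooted tree optimizes $a$ and $b$ at once, and that this property is preserved at every level; the recursion confirms it for the endpoint-rooted path and the center-rooted star, but it must be tracked throughout. Second, the equality characterizations require upgrading the profile comparisons to strict inequalities whenever $T\neq P_n$ (resp.\ $T\neq S_n$), while discarding the harmless ties that arise merely from re-rooting the same path (sizes such as $(m-1)$, or two parts summing to $m-1$, all give the path value). I would handle this by showing that any vertex of degree at least $3$, or any child that is not itself extremal, produces a strict gain or loss in the profile optimization, which pins down $P_n$ and $S_n$ as the unique extremizers.
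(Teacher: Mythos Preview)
Your rooted–subtree DP with the pair $(a,b)$ and the monotonicity lemma is correct, but it is a genuinely different and considerably heavier route than the paper's. The paper disposes of the lower bound in one line: every proper $q$-coloring of a tree is non-monochromatic, so $\chi_{nm}(T;q)\ge q(q-1)^{n-1}$, while in $S_n$ each leaf must differ from the center, so every non-monochromatic coloring of $S_n$ is proper and equality holds; strictness comes from exhibiting a single non-proper non-monochromatic coloring on any $P_4$. For the upper bound the paper derives the path recurrence $\chi_{nm}(P_n;q)=(q-1)\chi_{nm}(P_{n-1};q)+(q-1)\chi_{nm}(P_{n-2};q)$ and, for an arbitrary $T_n\neq S_n$, picks a penultimate vertex $v$ with $\ell$ pendant leaves and non-leaf neighbor $w$, conditions on whether $c(v)=c(w)$, and deletes the leaves (and $v$ in the equal-color case) to obtain $\chi_{nm}(T_n;q)\le (q-1)^\ell\chi_{nm}(T_{n-\ell};q)+(q-1)^\ell\chi_{nm}(T_{n-\ell-1};q)$; a strict version of the path recurrence for $\ell\ge 2$ then finishes by induction. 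No two-parameter rooted bookkeeping is needed.

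What your machinery buys is a uniform template that could in principle handle other closed-neighborhood restrictions; what it costs is exactly the ``extremizer lemma'' you flag as the main obstacle. That lemma is provable but not free: on the $b$-side you need supermultiplicativity $a(P_{m_1})\,a(P_{m_2})\le a(P_{m_1+m_2})$ for the path values (a short induction from the recurrence), and on the $a$-side the profile optimization over $(m_1,\dots,m_d)$ is a genuine computation (for the minimum your $\prod_j S_j$ argument does go through after one checks that, with star children, $\prod_j S_j=q^{s}(q-1)^{m-1-s}$ is minimized at $s\in\{0,1\}$). As written, that lemma is only outlined, so your proposal is a correct plan with its hardest step still to be carried out; the paper's penultimate-vertex deletion sidesteps this step entirely and yields the equality cases with essentially no extra work.
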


As these results show, the extremal trees are often $P_n$ or $S_n$.  It is tempting to conjecture that any color restriction on $N[v]$ will cause $P_n$ to maximize the number of colorings among all trees; in other words, that a maximizing graph is independent of the color restriction on $N[v]$. The corresponding statement in the family of regular graphs is in fact true: the regular graph that maximizes the number of colorings given by a restriction on the colors on $N[v]$ or on $N(v)$ is independent of the coloring lists \cite[Theorem 9]{CutlerRadcliffe}. 
The situation for trees, however, is different. We next present a related coloring scheme where the maximizing graph for the number of these colorings is not $P_n$ or $S_n$.

\begin{definition}
A \emph{2-strong-conflict-free coloring} of a graph $G$ with $q$ colors is a function $c:V(G) \to \{1,\ldots,q\}$ such that for each $v \in V(G)$ there are at least two colors that occur exactly once in $N[v]$. The number of 2-strong-conflict-free colorings of $G$ with $q$ colors is denoted $\chi_{2scf}(G;q)$.
\end{definition}

The $2$-strong-conflict-free colorings (in fact, $k$-strong-conflict-free colorings) were originally studied in \cite{Abel} under the name $k$-conflict-free colorings; see also \cite{Smor}.  
For $2$-strong-conflict-free colorings we have the following lower bound.

\begin{theorem}\label{thm-2scf}
Let $q \geq 3$ and $T \in \calT(n)$.  Then 
\[
\chi_{2scf}(P_n;q) \leq \chi_{2scf}(T;q).
\]
Equality occurs if and only if $T=P_n$.
\end{theorem}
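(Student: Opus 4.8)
The plan is to prove the result by induction on $n$, after first recording an exact formula for the path. On $P_n = w_1 w_2 \cdots w_n$ the $2$-strong-conflict-free condition simplifies dramatically: at a leaf $w_1$ we have $N[w_1]=\{w_1,w_2\}$, so the condition is exactly $c(w_1)\neq c(w_2)$; at an internal (degree-$2$) vertex $w_i$ we have $|N[w_i]|=3$, and a three-element multiset has at least two colors of multiplicity one if and only if all three are distinct. Hence a coloring is $2$-strong-conflict-free on $P_n$ precisely when no two vertices within distance $2$ share a color, i.e. $c(w_1)$ is free, $c(w_2)\neq c(w_1)$, and each later $c(w_i)$ avoids its two predecessors. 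This gives
\[
\chi_{2scf}(P_n;q) = q(q-1)(q-2)^{n-2}.
\]
It therefore suffices to show that every $T\in\calT(n)$ with $T\neq P_n$ satisfies $\chi_{2scf}(T;q) > q(q-1)(q-2)^{n-2}$, which I would prove by strong induction on $n$ (the cases $n\le 3$ being immediate, and the hypothesis $q\ge 3$ being what keeps the quantity positive).

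The engine of the induction is a clean deletion recursion. Suppose $T$ has a leaf $u$ whose neighbor $w$ has degree $2$, with second neighbor $w'$. Conditioning on the coloring of $T-u$, the only constraints that involve $u$ are the one at $u$ (forcing $c(u)\neq c(w)$) and the one at $w$ (forcing $c(u),c(w),c(w')$ pairwise distinct); every other constraint of $T$ coincides with the corresponding constraint of $T-u$, except that in $T-u$ the vertex $w$ is a leaf whose constraint is exactly $c(w)\neq c(w')$. Summing over $c(u)$, I find there are precisely $q-2$ valid choices of $c(u)$ when the coloring of $T-u$ is itself $2$-strong-conflict-free, and none otherwise, so
\[
\chi_{2scf}(T;q) = (q-2)\,\chi_{2scf}(T-u;q).
\]
Whenever such a leaf $u$ exists (\textbf{Case A}), the induction hypothesis applied to the $(n-1)$-vertex tree $T-u$ yields $\chi_{2scf}(T;q)\ge q(q-1)(q-2)^{n-2}$ with equality iff $T-u=P_{n-1}$; and $T-u=P_{n-1}$ with $\deg_T(w)=2$ forces $w$ to be an endpoint of $P_{n-1}$, hence $T=P_n$, which settles the equality case.

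It remains to treat $T$ for which \emph{every} leaf has a neighbor of degree at least $3$ (\textbf{Case B}); here $T\neq P_n$ and I must prove a strict inequality. Take a longest path of $T$ with end-leaf $v_0$ and neighbor $v_1$; by maximality every neighbor of $v_1$ other than the next path vertex is a leaf, and since $\deg(v_1)\ge 3$ the vertex $v_1$ has at least two leaf-neighbors. Relocating one such leaf $b$ from $v_1$ to $v_0$ produces a tree $T'$ in which $v_0$ has degree $2$ and $b$ is a leaf off a degree-$2$ vertex, so $T'$ lies in Case A. Conditioning on the common coloring of $V\setminus\{v_0,b\}$, every constraint except those at $v_1,v_0,b$ is identical in $T$ and $T'$, and the comparison $\chi_{2scf}(T;q)\ge\chi_{2scf}(T';q)$ reduces to a pointwise inequality between the number of admissible colorings of $\{v_0,b\}$ in the two trees. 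Granting this, $\chi_{2scf}(T;q)\ge\chi_{2scf}(T';q)\ge q(q-1)(q-2)^{n-2}$ by Case A; if $T'\neq P_n$ the second inequality is strict, while if $T'=P_n$ the tree $T$ is an explicit one-edge modification of $P_n$ and strictness of the first inequality can be verified directly.

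The crux, and the step I expect to be the main obstacle, is the pointwise inequality underlying Case B. Writing $p=c(v_1)$ and letting $F$ denote the multiset of colors on $N[v_1]\setminus\{v_0,b\}$ (so $p\in F$), the comparison becomes
\[
\#\bigl\{(x,y)\in([q]\setminus\{p\})^2 : F\cup\{x,y\}\ \text{has}\ \ge 2\ \text{colors of multiplicity}\ 1\bigr\} \ \ge\ (q-2)\cdot\#\bigl\{x\in[q]\setminus\{p\} : F\cup\{x\}\ \text{has}\ \ge 2\ \text{colors of multiplicity}\ 1\bigr\}.
\]
This is a finite statement about multisets, and I would prove it by cases on $e$, the number of colors of multiplicity $1$ in $F$, tracking the partition of $[q]\setminus\{p\}$ into colors absent from, appearing once in, or appearing at least twice in $F$. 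The cases $e=0,1$ and $e\ge 3$ are routine, but $e=2$ is delicate: there the right-hand count can be as large as $(q-2)(q-1-n_O)$ (with $n_O$ the number of once-appearing colors different from $p$), and one must check that allowing \emph{two} free additions to $v_1$'s neighborhood always recovers at least as many valid pairs. Carrying out this case analysis cleanly, pinning down exactly when equality occurs, and supplying the direct strictness check in the residual $T'=P_n$ subcase are the places where the real work lies.
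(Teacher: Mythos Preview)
Your Case A recursion is correct and nicely argued. The real problem is Case B: the pointwise multiset inequality you propose is \emph{false}, so the reduction cannot be completed as outlined. Take $q=4$, $p=1$, and $F=\{1,1,2\}$ (which arises, for instance, when $\deg_T(v_1)=4$ with leaf-neighbours $v_0,b,\ell_1$ and path-neighbour $v_2$, and the partial coloring has $c(v_1)=c(v_2)=1$, $c(\ell_1)=2$; the leaf constraint at $\ell_1$ is satisfied, and one can complete the rest of the tree legitimately). On the right-hand side, $F\cup\{x\}$ has $\ge 2$ singletons only for $x\in\{3,4\}$, giving $(q-2)\cdot 2=4$. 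On the left-hand side, $F\cup\{x,y\}=\{1,1,2,x,y\}$ has $\ge 2$ singletons only for $(x,y)\in\{(3,4),(4,3)\}$, giving $2$. Thus the pointwise count for $T$ is strictly \emph{smaller} than for $T'$ at this partial coloring, and your claim that the global comparison ``reduces to a pointwise inequality'' collapses. One might still hope that the global inequality $\chi_{2scf}(T;q)\ge\chi_{2scf}(T';q)$ holds after summing over all partial colorings, but that would require an entirely different (and non-local) argument; the plan as written does not supply one.

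By contrast, the paper's proof avoids all of this machinery. It simply observes that rooting $T$ at a leaf and coloring greedily outward---each new vertex avoiding the colors of the two nearest ancestors on its root-path---produces a 2-strong-conflict-free coloring, yielding at least $q(q-1)(q-2)^{n-2}$ of them; and whenever some vertex has degree $\ge 3$, one of its children may instead reuse the grandparent's color, giving a strict surplus. This is a two-line lower-bound construction with no induction, no leaf relocation, and no multiset case analysis. Your Case A recursion and the path formula are fine and agree with the paper, but the entire Case B apparatus is both unnecessary and, as it stands, unsound.
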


Furthermore, for 2-strong-conflict-free colorings neither the path nor the star is the maximizer. Take $n=6$ and $q=3$.  Then it is easy to see that $\chi_{2scf}(P_6;3) = 6$ and $\chi_{2scf}(S_6;3) = 3\cdot 5 \cdot 2 = 30$ (all colorings of $S_6$ are obtained by coloring the center of $S_6$ with one color, exactly one leaf with a second color, and the remaining leaves with the third color).  But now consider the tree $T$ which is a balanced double star, i.e., it has two adjacent vertices $v_1$ and $v_2$ (the centers of the double star), each with two leaves.  There are $6$ ways to have distinct colors on $v_1$ and $v_2$, and $3$ ways to color each pair of leaves, noting that a leaf must have a different color from its neighbor.  There are $3$ ways to have the same color on $v_1$ and $v_2$, and $2$ ways to color each pair of leaves in this situation.  Therefore $\chi_{2scf}(T;3) = 6 \cdot 3 \cdot 3 + 3 \cdot 2 \cdot 2 = 66$. For general $n$, it is not clear what the maximizing tree is in this case. 

These results show that the extremal graphs in $\calT(n)$ are not independent of the list restrictions, unlike the results for regular graphs.  Note that in regular graphs all neighborhoods (closed neighborhoods, respectively) have the same size, whereas the size of a neighborhood (closed neighborhood, respectively) in a tree can vary significantly from vertex to vertex. 

We also consider colorings where the color classes induce a forest of stars. Here the color restrictions are on paths in the tree.

\begin{definition}
A \emph{star coloring} of a graph $G$ with $q$ colors is a function $c:V(G) \to \{1,\ldots,q\}$ such that (1) $v_1v_2 \in E(G)$ implies $c(v_1)\neq c(v_2)$, and (2) for each (not necessarily induced) $P_4$ in $G$, $c|_{P_4}$ maps onto at least three colors. The number of star colorings of $G$ with $q$ colors is denoted $\chi_{s}(G;q)$.
\end{definition}

Star colorings were first introduced by Gr\"unbaum in \cite{Grunbaum}, and \cite{FRR} contains results on the star chromatic number of various families of graphs. We determine the trees with the extremal number of star colorings.

\begin{theorem}\label{thm-starcols}
Let $q \geq 3$ and $T \in \calT(n)$. Then
\[
\chi_s(P_n;q) \leq \chi_s(T;q) \leq \chi_s(S_n;q).
\]
Equality occurs in the lower bound if and only if $T=P_n$ and in the upper bound if and only if $T=S_n$.
\end{theorem}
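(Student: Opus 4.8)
The upper bound is immediate. Every star coloring is in particular a proper coloring, and (as noted in the introduction) every tree on $n$ vertices has exactly $q(q-1)^{n-1}$ proper $q$-colorings. Since $S_n$ has diameter $2$ it contains no (not necessarily induced) $P_4$, so condition (2) in the definition is vacuous and every proper coloring of $S_n$ is a star coloring; hence $\chi_s(S_n;q) = q(q-1)^{n-1}$. Therefore $\chi_s(T;q) \le q(q-1)^{n-1} = \chi_s(S_n;q)$ for every $T \in \calT(n)$, with equality if and only if every proper coloring of $T$ is a star coloring, i.e.\ if and only if $T$ admits no bichromatic $P_4$, which happens exactly when $T$ has no $P_4$ at all, i.e.\ (for $n \ge 4$) if and only if $T = S_n$.

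For the lower bound I would first record a local reformulation of the star condition that is convenient for induction: a proper coloring $c$ of a tree fails to be a star coloring exactly when some edge $xy$ is the middle edge of a bichromatic $P_4$, that is, when $x$ has a neighbor other than $y$ colored $c(y)$ while simultaneously $y$ has a neighbor other than $x$ colored $c(x)$. The plan is to prove $\chi_s(P_n;q) \le \chi_s(T;q)$ by induction on $n$, deleting a leaf $v$ whose neighbor $u$ lies at the end of a longest path of $T$. Writing $T' = T - v$ and letting $SC(T')$ be the set of star colorings of $T'$, the reformulation shows that $c' \in SC(T')$ extends to a star coloring of $T$ precisely by assigning $v$ any color different from $c'(u)$ and from $c'(w)$ for every neighbor $w$ of $u$ in $T'$ that itself has a neighbor (other than $u$) colored $c'(u)$. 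Collecting these forbidden colors into a set $F(c',u)$ gives
\[
\chi_s(T;q) = \sum_{c' \in SC(T')} \bigl(q - 1 - |F(c',u)|\bigr).
\]

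For the path this recursion has a clean form: the neighbor $u$ of the deleted end-leaf has degree $2$ in $T'$, so $|F(c',u)|$ equals $1$ when the color two steps back repeats $c'(u)$ and $0$ otherwise, yielding a two-state linear recurrence (tracking whether $c(v_{m-2}) = c(v_m)$) that determines $\chi_s(P_n;q)$. The substance of the lower bound is then to show that branching cannot push the count below the path's value. I would carry this out by a branch-straightening operation: given a non-path tree, choose a branch vertex $w$ all but one of whose branches are pendant paths, detach one pendant path from $w$, and reattach it to the far end of another; this reduces $\deg(w)$ while lengthening a path, moving $T$ toward $P_n$. The goal is to prove that each such move does not increase $\chi_s$, so that iterating to $P_n$ gives $\chi_s(P_n;q) \le \chi_s(T;q)$.

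The hard part will be establishing the monotonicity of this straightening operation, together with the precise equality characterization. The difficulty is exactly the tension visible in the displayed recursion: straightening creates new long $P_4$'s (which tends to lower the count) but simultaneously decreases a degree at $w$, which alters both $|SC(T')|$ and the per-coloring forbidden sets in the opposite direction, so the two effects must be balanced rather than compared termwise. Controlling them seems to require strengthening the inductive hypothesis to track refined boundary statistics of the colorings near the modified edge -- analogous to the two-state profile used for the path -- rather than the single quantity $\chi_s$. Pinning down when the move is strict is especially delicate, since straightening can be count-preserving on short legs, so the equality case should be verified against small configurations with care; this is where I expect the main casework to lie.
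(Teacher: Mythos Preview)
Your upper bound argument is correct and essentially the same as the paper's: $S_n$ is the unique $n$-vertex tree with no $P_4$, so it is the unique tree for which every proper coloring is a star coloring.

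For the lower bound, however, what you have written is a plan rather than a proof. You propose a branch-straightening operation and correctly identify that its monotonicity is the crux, but you do not establish it; you yourself flag that controlling the two competing effects ``seems to require strengthening the inductive hypothesis'' and that the equality analysis is ``where I expect the main casework to lie.'' As it stands, the lower bound is not proved.

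The paper avoids these difficulties with a more direct induction that does not use any graph transformation. Choose a leaf $u$ whose neighbor $v$ is penultimate on a longest path, let $w$ be the non-leaf neighbor of $v$, and let $x$ be a neighbor of $w$ other than $v$. Partition the star colorings of $T_n$ according to whether $c(u)=c(w)$ or $c(u)\neq c(w)$. In the second case, deleting $u$ gives exactly $(q-2)\chi_s(T_{n-1};q)$ colorings (the extension to $u$ must avoid $c(v)$ and $c(w)$). In the first case, delete $v$ together with all $\ell$ of its pendant neighbors: one recovers exactly $(q-2)(q-1)^{\ell-1}\chi_s(T_{n-\ell-1};q)$ colorings, since $v$ must avoid $c(w)$ and $c(x)$, $u$ is forced to $c(w)$, and the remaining $\ell-1$ pendants need only avoid $c(v)$. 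This yields the \emph{exact} recursion
\[
\chi_s(T_n;q) = (q-2)\chi_s(T_{n-1};q) + (q-2)(q-1)^{\ell-1}\chi_s(T_{n-\ell-1};q),
\]
which compares termwise to the path recursion $\chi_s(P_n;q) = (q-2)\chi_s(P_{n-1};q) + (q-2)\chi_s(P_{n-2};q)$ via the easy inequality $(q-1)^{\ell-1}\chi_s(P_{n-\ell-1};q) \ge \chi_s(P_{n-2};q)$ (extend a star coloring of $P_{n-\ell-1}$ properly along $\ell-1$ new edges). The equality case falls out immediately: the last inequality is strict for $\ell>1$, and when $\ell=1$ induction forces $T_{n-1}=P_{n-1}$ and $T_{n-2}=P_{n-2}$.

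The key idea you are missing is to condition on the colors of $u$ and $w$ (the leaf and its ``grandparent'') rather than only on the forbidden set at $u$; this collapses your variable-size set $F(c',u)$ into two clean cases and produces an exact recursion that parallels the path's, making the comparison straightforward and the equality characterization automatic.
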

We remark that the path \emph{minimizes} and the star \emph{maximizes} the number of star colorings in $\calT(n)$, which differs from the corresponding extremal results for conflict-free, odd, star rainbow, and non-monochromatic colorings.

Our techniques also allow us to extend a result from \cite{CutlerRadcliffe} for $q=2$ colors to arbitrary $q$. In that paper, the concept of an existence homomorphism is introduced and investigated.

\begin{definition}[\cite{CutlerRadcliffe}]
Suppose that $G$ and $H$ are graphs with $H$ possibly having loops.  We say that a map $\phi:V(G) \to V(H)$ is an \emph{existence homomorphism} if, for every $v \in V(G)$, there exists a $w \in N(v)$ such that $\phi(v)\phi(w) \in E(H)$. We let $\text{xhom}(G,H)$ be the number of existence homomorphisms from $G$ to $H$.
\end{definition}
In \cite{CutlerRadcliffe} the authors consider $H= E_q^\circ$, the completely looped graph on $q$ isolated vertices, and in this case an existence homomorphism from a graph $G$ to $E_q^\circ$ is a coloring of the vertices of $G$ with $q$ colors so that each color class has no isolated vertices (note that a color class may be empty). They show the following.

\begin{theorem}[Cutler-Radcliffe \cite{CutlerRadcliffe}]\label{thm-CR}
If $T$ is a tree on $n$ vertices, then 
\[
\xhom(T,E_2^\circ) \leq \xhom(P_n,E_2^\circ),
\]
with equality if and only if $T = P_n$.
\end{theorem}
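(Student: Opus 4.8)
The plan is to reduce the problem to a two-parameter recursion on rooted trees. An existence homomorphism $\phi:V(T)\to V(E_2^\circ)$ is exactly a $2$-coloring of $T$ in which every vertex has a neighbor of its own color. Root $T$ at an arbitrary vertex $r$; for a vertex $v$ let $T_v$ be the subtree hanging below $v$, and (fixing the color of $v$, say to color $1$) let $a(v)$ be the number of $2$-colorings of $T_v$ in which \emph{every} vertex of $T_v$ has a same-colored neighbor inside $T_v$, and $b(v)$ the number in which every vertex of $T_v$ \emph{except possibly $v$ itself} has such a neighbor. By color symmetry these counts are the same for color $2$, so $\xhom(T,E_2^\circ)=2a(r)$. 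If $v$ has children $c_1,\dots,c_k$, then deciding for each child whether it repeats the color of $v$ yields $b(v)=\prod_i\bigl(a(c_i)+b(c_i)\bigr)$ and $a(v)=b(v)-\prod_i a(c_i)$, with $a(v)=0$ and $b(v)=1$ at a leaf. (Equivalently $a(v)$ counts edge covers of $T_v$, so that $\xhom(T,E_2^\circ)$ is twice the number of edge covers of $T$.)

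First I would compute the path. Rooting $P_n$ at an endpoint, the recursion becomes $(a,b)\mapsto(b,\,a+b)$, so with Fibonacci numbers $F_0=0,\ F_1=1$ one gets $a=F_{n-1}$ and $b=F_n$; hence $\xhom(P_n,E_2^\circ)=2F_{n-1}$. The theorem is therefore equivalent to showing $a(r)\le F_{n-1}$ for every $n$-vertex tree, with equality only for $P_n$.

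The heart of the argument is a lemma proved by strong induction on $m=|V(T_v)|$, simultaneously with the bound $a(v)\le F_{m-1}$: for every rooted tree on $m$ vertices, $b(\mathrm{root})\le F_m$, with equality if and only if the tree is a path rooted at one of its endpoints. In the inductive step, if the root has children $c_1,\dots,c_k$ with $|T_{c_i}|=m_i$ and $\sum_i m_i=m-1$, then monotonicity of the product together with the inductive bounds $a(c_i)\le F_{m_i-1}$ and $b(c_i)\le F_{m_i}$ gives $b(\mathrm{root})\le\prod_i F_{m_i+1}$. The key ingredient is the Fibonacci inequality $F_{s+1}F_{t+1}\le F_{s+t+1}$, which follows from the identity $F_{s+t+1}=F_{s+1}F_{t+1}+F_sF_t$ and is strict whenever $s,t\ge 1$; iterating it collapses the product to $F_{(\sum_i m_i)+1}=F_m$, with equality forcing $k=1$. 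Tracing the equality conditions then forces each child to be an endpoint-rooted path and $k=1$, i.e.\ the whole tree to be an endpoint-rooted path. (Note that the bound $a(c_i)\le F_{m_i-1}$ used here is itself the $a$-statement for a smaller tree, which follows from the $b$-lemma one size down, so the two claims are established together.)

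Finally I would deduce the theorem by rooting the given tree $T$ at a leaf $r$: then $r$ has a single child $c$, so $a(r)=b(c)$, where $T_c=T-r$ has $n-1$ vertices. The lemma gives $a(r)=b(c)\le F_{n-1}$, with equality only when $T-r$ is an endpoint-rooted path, i.e.\ $T=P_n$. Multiplying by $2$ yields $\xhom(T,E_2^\circ)\le 2F_{n-1}=\xhom(P_n,E_2^\circ)$, with equality iff $T=P_n$. I expect the main obstacle to be isolating the correct auxiliary quantity $b(v)$ and recognizing that the extremal problem for $a$ should be attacked through the cleaner, fully monotone quantity $b$ (so that $a$ need only be controlled at a leaf-root); once that reduction is in place, the required extremal inequality is exactly the superadditivity-type Fibonacci identity above.
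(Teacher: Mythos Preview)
Your argument is correct. The edge-cover interpretation makes $a(v)$ intrinsic (independent of the root), which is what allows you to feed the bound $a(c_i)\le F_{m_i-1}$ back into the inductive proof of the $b$-lemma by re-rooting $T_{c_i}$ at a leaf; you state this a bit tersely, but all the pieces are there. The Fibonacci identity $F_{s+t+1}=F_{s+1}F_{t+1}+F_sF_t$ does exactly the work you claim, and the equality analysis (forcing $k=1$ and then an endpoint-rooted path by induction) is clean.

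However, your route is genuinely different from the paper's. The paper does not prove Theorem~\ref{thm-CR} on its own; it proves the more general Theorem~\ref{thm-xhom} for all $q\ge 2$, of which this is the case $q=2$. That proof proceeds by a longest-path argument: take a penultimate vertex $v$ with pendant neighbors $u_1,\dots,u_\ell$ and non-pendant neighbor $w$, and split according to whether $c(v)=c(w)$. This yields the exact recursion $\xhom(T_n,E_q^\circ)=\xhom(T_{n-\ell},E_q^\circ)+(q-1)\,\xhom(T_{n-\ell-1},E_q^\circ)$, which is then compared termwise with the corresponding path recurrence and a strict inequality for $\ell\ge 2$. No auxiliary rooted quantity like your $b$, no Fibonacci identity, and no edge-cover reformulation are used. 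The payoff of the paper's decomposition is that it is insensitive to $q$: the same two-case split gives the result for every $E_q^\circ$. The payoff of your approach is structural: you identify $\xhom(T,E_2^\circ)$ as twice the number of edge covers, obtain the exact value $2F_{n-1}$ for the path, and your $b$-lemma is a slightly stronger statement than the theorem itself. The price is that the Fibonacci product inequality is special to $q=2$; your method does not obviously extend to general $q$ without replacing that step.
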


We generalize Theorem \ref{thm-CR} to $q>2$ colors and also find the minimizing tree.

\begin{theorem}\label{thm-xhom}
Let $q \geq 2$ and $T \in \calT(n)$. Then 
\[
\xhom(S_n,E_q^\circ) \leq \xhom(T,E_q^\circ) \leq \xhom(P_n,E_q^\circ).
\]
Equality occurs in the lower bound if an only if $T=S_n$ and in the upper bound if and only if $T=P_n$. 
\end{theorem}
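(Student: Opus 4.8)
The plan is to first unwind the definition. Since the only edges of $E_q^\circ$ are loops, a map $\phi\colon V(T)\to V(E_q^\circ)$ is an existence homomorphism precisely when every vertex of $T$ has a neighbor of its own color; thus $\xhom(T,E_q^\circ)$ counts the $q$-colorings of $T$ in which every color class induces a subgraph with no isolated vertex. The lower bound is then immediate, since the $q$ monochromatic colorings are always valid (every vertex of a tree on at least two vertices has a neighbor), giving $\xhom(T,E_q^\circ)\ge q=\xhom(S_n,E_q^\circ)$. For the equality case I would note that a non-star tree has diameter at least $3$ and hence contains a path $a\,b\,c\,d$; deleting the edge $bc$ splits $T$ into two subtrees, each of size at least two, and coloring one monochromatically with color $1$ and the other with color $2$ (and again with the colors reversed) produces two further valid colorings, so $\xhom(T,E_q^\circ)\ge q+2>q$ whenever $T\ne S_n$.

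For the upper bound I would set up a two-state transfer recursion on a rooted tree. For a rooted subtree $T_v$ and a fixed color of $v$, let $A_v$ (resp.\ $B_v$) be the number of colorings of $T_v$ in which every vertex other than $v$ has a same-colored neighbor in $T_v$ and in which $v$ does (resp.\ does not) have a same-colored child. A leaf has $(A_v,B_v)=(0,1)$; the recursion over the children $u$ of $v$ is $A_v+B_v=\prod_u(qA_u+B_u)$ and $B_v=\prod_u(q-1)A_u$; and $\xhom(T,E_q^\circ)=qA_r$ for the root $r$. Writing $P_u=qA_u+B_u$ and $Q_u=(q-1)A_u$, a pendant path with $\ell$ vertices is controlled by the sequence $A_1=0$, $A_2=1$, $A_{\ell+1}=A_\ell+(q-1)A_{\ell-1}$, for which one checks the identities $P_\ell=A_{\ell+2}$, $Q_\ell=(q-1)A_\ell$, and the addition formula $A_{m+n}=A_{m+1}A_{n+1}+(q-1)A_mA_n$.

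The core of the proof is a surgery that strictly increases the count while lowering the number of leaves. Rooting $T$ at a vertex $v$ of degree at least $3$ two of whose neighbors start pendant paths --- a deepest branch vertex, relative to an initial rooting at a leaf, has this property --- I would replace two pendant paths with $a$ and $b$ vertices at $v$ by a single pendant path with $a+b$ vertices, obtaining a tree $T'$. Only these two branches change, so if $\Pi_P$ and $\Pi_Q$ denote the products of the $P_u$ and of the $Q_u$ over the remaining children of $v$, then $\xhom(T',E_q^\circ)-\xhom(T,E_q^\circ)=q\,\Pi_P\,(P_{a+b}-P_aP_b)-q\,\Pi_Q\,(Q_{a+b}-Q_aQ_b)$. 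The addition formula forces both parenthesized differences to collapse to the common value $(q-1)A_{a+1}A_{b+1}$, so this equals $q(q-1)A_{a+1}A_{b+1}(\Pi_P-\Pi_Q)$. This is strictly positive: $A_{a+1},A_{b+1}\ge 1$, and $\Pi_P>\Pi_Q\ge 0$ because $v$ has at least one further child $u$ with $P_u-Q_u=A_u+B_u\ge 1$.

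Finally I would close the argument by induction on the number of leaves: every tree that is not $P_n$ admits such a vertex $v$, the surgery decreases the number of leaves by one, and a tree with two leaves is $P_n$, so iterating gives $\xhom(T,E_q^\circ)<\xhom(P_n,E_q^\circ)$ for all $T\ne P_n$. I expect the main obstacle to be precisely the pendant-path surgery: isolating $(A_v,B_v)$ as the correct state and, above all, establishing the addition formula that makes the two differences collapse to a single nonnegative quantity; once that identity is in hand, the sign bookkeeping ($\Pi_P>\Pi_Q$ together with $A_{a+1},A_{b+1}\ge1$) is routine.
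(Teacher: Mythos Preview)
Your argument is correct, and for the upper bound it takes a genuinely different route from the paper. The paper proceeds by induction on $n$: it establishes the recurrence $\xhom(P_n,E_q^\circ)=\xhom(P_{n-1},E_q^\circ)+(q-1)\xhom(P_{n-2},E_q^\circ)$ together with the strict inequality $\xhom(P_n,E_q^\circ)>\xhom(P_{n-\ell},E_q^\circ)+(q-1)\xhom(P_{n-\ell-1},E_q^\circ)$ for $\ell\ge 2$, then takes a penultimate vertex $v$ on a longest path of $T$ (with $\ell$ pendant neighbors and one non-pendant neighbor $w$), conditions on whether $c(v)=c(w)$, and feeds the two resulting subtrees into the inductive hypothesis. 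Your approach instead sets up the two-state recursion $(A_v,B_v)$, proves the Fibonacci-type addition formula $A_{m+n}=A_{m+1}A_{n+1}+(q-1)A_mA_n$, and uses it to show that merging two pendant paths at a branch vertex strictly increases the count; iterating this surgery walks $T$ to $P_n$ while the count strictly grows.

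The paper's argument is shorter and needs no algebraic identity beyond the one-step path recurrence. Your approach costs the addition formula but yields more: it exhibits an explicit monotone chain of trees from $T$ to $P_n$ under a local move, which is a stronger structural statement than the bare inequality and tends to port well to related counting problems. Your treatment of the lower bound (monochromatic colorings plus a $P_4$-based non-monochromatic example when $T\ne S_n$) is essentially the same as the paper's.
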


It would be interesting to investigate the maximum and minimum number of these colorings for various other families of graphs, including regular graphs (see \cite[Proposition 18]{CutlerRadcliffe} for maximizing $\xhom(G,E_2^\circ)$ over all $2$-regular graphs), graphs with a fixed minimum degree, and graphs with a fixed number of edges.

\section{Proofs}\label{sec-proofs}
In this section, we present the proofs of the theorems stated in Section \ref{sec-thms}.

\subsection{Odd colorings --- Proof of Theorem \ref{thm-odd}} 

First, we prove the extremal result for odd colorings. Recall that an odd coloring is a vertex coloring where for every vertex $v$ there is a color occurring an odd number of times in $N[v]$.

\begin{proof}[Proof of Theorem \ref{thm-odd}]
Let $T \in \calT(n)$. Notice that if $v \in T$ is a vertex such that $d(v)$ is even, then by parity considerations some color will appear an odd number of times on $N[v]$.  Also note that if $v$ is an uncolored leaf whose neighbor is colored, then there are at most $q-1$ ways to extend the odd coloring to $v$ (by considering only the restrictions on the leaf $v$).  So by coloring all non-leaves first, we see that if $T$ has $k$ leaves then $\chi_{odd}(T;q) \leq q^{n-k}(q-1)^k$.  Since $P_n$ is the unique tree with at most two leaves and in $P_n$ every non-leaf has even degree, the above observations show that $P_n$ is the unique tree maximizing the number of odd colorings, and  $\chi_{odd}(P_n;q) = q^{n-2}(q-1)^2$.

Since every tree $T$ has $q(q-1)^{n-1}$ proper colorings and every proper coloring is an odd coloring, we have $\chi_{odd}(T;q) \geq q(q-1)^{n-1}$. As $S_n$ has $n-1$ leaves, it also has at most $q(q-1)^{n-1}$ odd colorings, and so  $\chi_{odd}(S_n;q) = q(q-1)^{n-1}$.  

The remainder of the proof characterizes the trees that minimize the number of odd colorings. To do so, we characterize the trees $T$ admitting an odd coloring which is not proper. Suppose there is a path $P_4$ in $T$.  When $q>2$, we color the middle two vertices with color $1$ and the outer two vertices with color $2$. Then we color each neighbor of these four vertices with color $3$. From here, we iteratively complete the coloring by coloring an uncolored vertex $v$ that has a neighbor $w$ that is colored by assigning a color for $v$ that is distinct from the color on $w$. Note that the set of colored vertices is always connected and so each uncolored vertex has at most one colored neighbor. 
By construction, the vertices on the path $P_4$ all have color $2$ appearing once on their closed neighborhood, and any vertex $v$ not on the path $P_4$ has the color on $v$ appearing once on $N[v]$. Since this produces an odd coloring which is not proper, when $q>2$ the only tree that minimizes the number of odd colorings is the star.

Finally, assume that $q=2$. We show that there is an odd coloring of $T$ which is not proper if and only if $T$ has at least two vertices of even degree.  Suppose first that $T$ has two vertices of even degree, and let $v$ and $w$ denote a pair of distinct vertices with even degree that has the minimum positive distance between them.  This implies that each vertex on the path between $v$ and $w$ has odd degree.  We color the vertices of this path with color $1$, and then iteratively properly color the rest of $T$.  If $v'$ is not on the path, then the color on $v'$ is distinct from all neighbors of $v'$ and so appears once on $N[v']$. 
Since $v$ and $w$ have even degree, by parity considerations they have a color appearing an odd number of times on their closed neighborhoods.  Finally, any other vertices on the path between $v$ and $w$ have exactly three vertices in their closed neighborhood with color $1$.  This exhibits an odd coloring of $T$ which is not proper.

Now suppose that there is at most one even degree vertex in $T$, and suppose that there is a non-proper odd coloring of $T$. Consider a maximal length monochromatic path in a non-proper coloring of $T$.  One of the endpoints of this monochromatic path, say $v$, must have $d(v)$ odd, and by maximality $v$ has exactly two vertices receiving one color in $N[v]$.  But as the number of vertices in $N[v]$ is even it follows that $v$ has an even number of vertices colored with the other color, which contradicts the definition of odd coloring at vertex $v$.  Therefore there is no odd coloring which is not proper in a tree with at most one even degree vertex.
\end{proof}

\subsection{Conflict-free and non-monochromatic colorings --- Proofs of Theorems \ref{thm-conffree} and \ref{thm-nm}}

We now prove the extremal results for conflict-free and non-monochromatic colorings. Recall first that a conflict-free coloring is a vertex coloring where for every vertex $v$ there is a color occurring exactly once in $N[v]$.

\begin{proof}[Proof of Theorem \ref{thm-conffree}]
Let $T \in \calT(n)$. For minimizing, notice that every proper coloring of $T$ is a conflict-free coloring of $T$, and every conflict-free coloring of $T$ is an odd coloring of $T$. All odd colorings of $S_n$ are proper, and so all conflict-free colorings of the star are proper. Furthermore, the proof of Theorem \ref{thm-odd} shows that any tree with a $P_4$ has a non-proper conflict-free coloring when $q>2$.  

For $q=2$, we show that a tree $T$ has only proper conflict-free colorings if and only if $T$ does not contain a non-trivial subtree $T'$ with the property that each vertex in $T'$ has exactly one neighbor outside $T'$.
Suppose that a tree $T$ has a non-proper conflict-free coloring, and consider a color class containing a non-trivial component.  Notice that every vertex in that non-trivial component must have exactly one neighbor outside the component (which necessarily is colored with the other color).  Conversely, if a tree $T$ has a non-trivial subtree $T'$ in which each vertex in $T'$ has exactly one neighbor outside $T'$, then we can color $T'$ with color $1$ and iteratively properly color the rest of the tree to produce a non-proper conflict-free coloring of $T$.

We now turn to the maximization question.  First, notice that $\chi_{cf}(P_1;q) = q$, $\chi_{cf}(P_2;q) = q(q-1)$, and $\chi_{cf}(P_3;q) = q(q-1)^2$. For $P_n$ with $n \geq 4$ we denote the vertices as $v_1,v_2,\ldots,v_n$, and we enumerate the conflict-free colorings of $P_n$ by conditioning on whether or not $v_2$ and $v_3$ have the same color. If they don't, then deleting $v_1$ leaves a conflict-free coloring on the remaining path.  If they do, then deleting $v_1$ and $v_2$ leaves a conflict-free coloring of the remaining path, since in this case $v_3$ must have a different color from $v_4$.

Since in either case the only restriction for the color on $v_1$ is that it must differ from the color on $v_2$, we have the recurrence 
\[
\chi_{cf}(P_n;q) = (q-1)\chi_{cf}(P_{n-1};q) + (q-1)\chi_{cf}(P_{n-2};q) \qquad (n \geq 4).
\]
Similarly, we can condition on whether or not $v_{\ell+1}$ and $v_{\ell+2}$ have the same color, and from there properly color $v_{1},\ldots,v_{\ell}$ (giving $q-1$ choices for a color on these $\ell$ vertices). If $\ell \geq 2$, notice that there is a non-proper conflict-free extension 
in which $v_{2}$ and $v_{3}$ have the same color (from the case where $v_{\ell+1}$ and $v_{\ell+2}$ have distinct colors). This implies that 
\begin{equation}\label{eqn-pathcf}
    \chi_{cf}(P_n;q) > (q-1)^\ell \chi_{cf}(P_{n-\ell};q) + (q-1)^{\ell} \chi^{cf}(P_{n-\ell-1};q)  \qquad (\ell \geq 2).
\end{equation} 

With these calculations in hand, we move to proving that $\chi_{cf}(T;q) \leq \chi_{cf}(P_n;q)$. Note by the characterization of uniqueness for minimizing the number of conflict-free colorings we have $\chi_{cf}(S_n;q) < \chi_{cf}(P_n;q)$ for $n \geq 4$ and $q \geq 2$. We induct on $n$ to show that the path $P_n$ is the unique tree maximizing the number of conflict-free colorings. The base cases $n \leq 3$ are trivial, so suppose that $n \geq 4$, $q \geq 2$, and $T_n \neq S_n$ is a tree on $n$ vertices. 

Find a maximum length path in $T_n$ and let $v$ be a penultimate vertex on this path. Denote the leaves adjacent to $v$ by $u_1,u_2,\ldots,u_\ell$ and let $w$ be the non-pendant neighbor of $v$ (here we use that $T_n \neq S_n$).  Then partition the conflict-free colorings of $T_n$ based on whether $v$ and $w$ have the same color or not.

If they don't, we delete $u_1$, $u_2$,$\ldots$, $u_\ell$, which results in a conflict-free coloring of the remaining tree.  Since each $u_i$ must have a different color from the color on $v$, letting $T_{n-\ell}:=T_n - \{u_1,\ldots,u_\ell\}$ there are at most $(q-1)^\ell \chi_{cf}(T_{n-\ell};q)$ conflict-free colorings of $T_n$ where the colors on $v$ and $w$ are not the same. 

If the colors on $v$ and $w$ are the same, we delete $u_1$, $u_2,\ldots$, $u_\ell,$ and $v$, which results in a conflict-free coloring of the resulting tree $T_{n-\ell-1}:= T_n - \{v,u_1,\ldots,u_{\ell}\}$. In this case we know that $v$ and $w$ must have the same color, and each $u_i$ must have a color differing from $v$.  This produces an upper bound of $(q-1)^\ell \chi_{cf}(T_{n-\ell-1};q)$ for the number of conflict-free colorings of $T_n$ where the colors on $v$ and $w$ are the same.

Putting these together and using the inductive hypothesis along with (\ref{eqn-pathcf}), we have
\begin{eqnarray*}
\chi_{cf}(T_n;q) &\leq& (q-1)^\ell \chi_{cf}(T_{n-\ell};q) + (q-1)^\ell \chi_{cf}(T_{n-\ell-1};q)\\
&\leq& (q-1)^\ell \chi_{cf}(P_{n-\ell};q) + (q-1)^\ell \chi_{cf}(P_{n-\ell-1};q)\\
&\leq& \chi_{cf}(P_{n};q).
\end{eqnarray*}
Furthermore, the last inequality is an equality only when $\ell=1$ by (\ref{eqn-pathcf}). When $\ell=1$, we have strict inequality in moving from the first line to the second line unless deleting a leaf and deleting a leaf plus its neighbor from $T_n$ leaves $P_{n-1}$ and $P_{n-2}$, respectively, which implies that $T_n = P_n$.  
\end{proof}

The proof for non-monochromatic colorings is similar to conflict-free colorings. Recall that a non-monochromatic coloring is a vertex coloring where for every vertex $v$ there are at least two colors occurring in $N[v]$. We remark that every proper coloring is a non-monochromatic coloring.

\begin{proof}[Proof of Theorem \ref{thm-nm}]
We describe the changes needed to the proof of Theorem \ref{thm-conffree}.  Notice that again a leaf must receive a different color from its neighbor, and so the only non-monochromatic colorings of the star are proper colorings. For uniqueness, suppose that there exists a $P_4$ in a tree $T$.  Coloring the two leaves of the path with color $2$ and the remaining two vertices with color $1$, and then iteratively properly coloring the rest of the tree, produces a non-proper coloring of $T$.

For maximizing, in $P_n$ we again condition on whether or not $v_2$ and $v_3$ have the same color. If they don't, then we delete $v_1$; if they do, then we delete $v_1$ and $v_2$. This gives
\[
\chi_{nm}(P_n;q) = (q-1)\chi_{nm}(P_{n-1};q) + (q-1)\chi_{nm}(P_{n-2};q) \qquad (n \geq 4).
\]
We also have $\chi_{nm}(P_1;q) = q$, $\chi_{nm}(P_2;q) = q(q-1)$, and $\chi_{nm}(P_3;q) = q(q-1)^2$.

As before, for $\ell \geq 2$ we also can condition on $v_{\ell+1}$ and $v_{\ell+2}$ having the same color or not, giving
\[
\chi_{nm}(P_n;q) > (q-1)^\ell\chi_{nm}(P_{n-\ell};q) + (q-1)^\ell\chi_{nm}(P_{n-\ell-1};q) \qquad (\ell \geq 2).
\]

We prove the result for trees $T \neq S_n$ by induction on $n$, with the base cases trivial. With the above bounds in place, the induction proceeds exactly as in the proof of Theorem \ref{thm-conffree}.
\end{proof}

\subsection{Star rainbow colorings --- Proof of Theorem \ref{thm-rainbowcols}}

Here we prove the extremal results for star rainbow colorings. Recall that a star rainbow coloring is a vertex coloring where for every vertex $v$ all colors occur at most one time in $N[v]$. Notice that all star rainbow colorings are proper colorings and that every coloring that uses a distinct color for each vertex is a star rainbow coloring.

\begin{proof}[Proof of Theorem \ref{thm-rainbowcols}]
By considering the center of $S_n$, we see that each vertex in a star rainbow coloring of $S_n$ must have a different color.  Therefore we have $\chi_{sr}(S_n;q)=q(q-1)\cdots(q-n+1)$, which is the minimum number for any $n$-vertex tree.  Also, any tree containing a path $P_{4}$ admits a star rainbow coloring where the two ends of the $P_{4}$ have color $1$ and the remaining vertices have distinct colors.  This shows that $S_n$ uniquely minimizes the number of star rainbow colorings.

Notice that we can obtain the count of the number of star rainbow colorings of a tree $T$ by iteratively coloring $T$ starting with a leaf. 
To color a vertex $v$ (adjacent to a colored vertex $w$), the color on $v$ must avoid all of the distinct colors appearing on the neighbors of $N[w]$ that have already been colored.

Using $T=P_n$, we see that  $\chi_{sr}(P_n;q) = q(q-1)(q-2)^{n-2}$.  If $T_n \neq P_n$, then $T_n$ has a vertex of degree at least three, and so by iteratively coloring from a leaf there is one vertex with at most $q-3$ color possibilities. This gives  $\chi_{sr}(T_n;q) \leq q(q-1)(q-2)^{n-3}(q-3)$. Therefore the unique tree the the most number of star rainbow colorings is the path. 
\end{proof}

Notice that the iterative coloring procedure in the proof of Theorem \ref{thm-rainbowcols} gives the number of star rainbow colorings for any tree. In particular, 
let $T_n$ be a tree with $n$ vertices, and let $v_1$ be a leaf of $T_n$.  Let $v_2,v_3,...,v_n$ be iteratively chosen vertices so that the induced graph on $v_1,\ldots,v_i$ for each $i \in [2,n]$ forms a tree.  Then the number of star rainbow colorings of $T_n$ is given by 
\[
\chi_{sr}(T_n;q) = q\cdot \prod_{i=2}^n \left(q - |\{k : k<i, j<i, v_i \sim v_j \sim v_k\}|-1\right).
\]
In words, when coloring vertex $v_i$, we must avoid the color on its unique neighbor $v_j$ that already has a color, and also any colors appearing on a vertex $v_k$ that is in the closed neighborhood of $v_j$.

\subsection{2-Strong-Conflict-Free Colorings --- Proof of Theorem \ref{thm-2scf}}

Here we prove the lower bound for 2-strong-conflict-free colorings of a tree. We first recall that a 2-strong-conflict-free coloring is a vertex coloring where for every vertex $v$ there are at least two colors that occur exactly once in $N[v]$.

\begin{proof}[Proof of Theorem \ref{thm-2scf}]
Note that $\chi_{2scf}(P_n;q) = q(q-1)(q-2)^{n-2}$, since the 2-strong-conflict-free colorings of $P_n$ are exactly the star rainbow colorings of $P_n$. 

Now let $T$ be an $n$-vertex tree, and root $T$ at a leaf.  Color the root and its unique neighbor, and then iteratively color out so that each vertex receives a different color than the color on the two closest vertices on the path to the root.  This produces a 2-strong-conflict-free coloring of $T$ and so $\chi_{2scf}(T;q) \geq q(q-1)(q-2)^{n-2}$. If $v$ is a vertex with degree at least three, then one of the two neighbors of $v$ that is not closest to the root can receive the same color as the neighbor of $v$ closest to the root. Therefore if $T \neq P_n$ then $\chi_{2scf}(T;q)>q(q-1)(q-2)^{n-2}=\chi_{2scf}(P_n;q)$.
\end{proof}

\subsection{Star colorings --- Proof of Theorem \ref{thm-starcols}}

Here we prove the extremal result for star colorings. Recall that a star coloring is a vertex coloring that is proper and has no 2-colored path $P_4$. 

\begin{proof}[Proof of Theorem \ref{thm-starcols}]
Star colorings are proper colorings with color restrictions on all $P_4$ subgraphs.  Since $S_n$ is the unique tree that has no $P_4$ subgraph, all proper colorings of $S_n$ are star colorings, and every other tree admits a proper coloring that is not a star coloring.  In particular, $S_n$ uniquely maximizes the number of star colorings. 

We now turn to minimizing the number of star colorings.  As in the proof of Theorem \ref{thm-conffree}, we first find a recursion for the number of star colorings of paths, and to start we have $\chi_s(P_1;q)=q$, $\chi_s(P_2;q)=q(q-1)$, and $\chi_s(P_3;q)=q(q-1)^2$.  For $n\geq 4$ suppose the path $P_n$ has vertices $v_1,v_2,\ldots,v_n$, and we partition the star colorings of $P_n$ based on whether $v_1$ and $v_3$ have the same color or not.  If they do, then $v_2$ and $v_4$ must have different colors, and those colors must also differ from the color on $v_3$. Deleting $v_1$ and $v_2$ produces a star coloring of $P_{n-2}$, and furthermore any star coloring of this $P_{n-2}$ extends to $q-2$ star colorings of $P_n$ of this type. 

If $v_1$ and $v_3$ have different colors, then we delete $v_1$ and are left with a star coloring of $P_{n-1}$.  But any star coloring of $P_{n-1}$ extends to $q-2$ star colorings of $P_n$ of this type, as $v_2$ and $v_3$ have different colors so we can color $v_1$ with any color that differs from those on $v_2$ and $v_3$. 

Putting this together, we have
\[
\chi_{s}(P_n;q) = (q-2)\chi_{s}(P_{n-1};q) + (q-2)\chi_{s}(P_{n-2};q) \qquad (n \geq 4).
\]

We now show that $\chi_{s}(P_n;q) \leq \chi_{s}(T;q)$. 
We induct on $n$ to show that the path $P_n$ is the unique tree minimizing the number of star colorings. The base cases $n \leq 3$ are trivial, so suppose that $n \geq 4$, $q \geq 2$, and $T_n$ is a tree on $n$ vertices. By the uniqueness of the upper bound we have $\chi_s(S_n;q) > \chi_{s}(P_n;q)$, so we may assume $T_n \neq S_n$.
Let $v$ be a vertex with at most one non-pendant neighbor (take, for example, $v$ to be a penultimate vertex in a maximal path). We let distinct vertices $u$, $w$, and $x$ be such that $u$ is a leaf and $u \sim v \sim w \sim x$, and partition the star colorings of $T_n$ based on the colors on $u$ and $w$. 

If the colors on $u$ and $w$ are different, we delete $u$. This gives a star coloring of $T_{n-1} := T_n - u$. But all star colorings of $T_n$ of this type come from a star coloring of $T_{n-1}$ by giving $u$ a different color from $v$ and $w$, so there are exactly $(q-2)\chi_s(T_{n-1};q)$ colorings of $T_n$ where the colors on $u$ and $w$ are distinct.

If the colors on $u$ and $w$ are the same, we delete $v$ and the $\ell$ pendant neighbors of $v$ (including $u$).  We recover all star colorings of $T_n$ of this type from a star coloring of the remaining tree $T_{n-\ell-1}$ by giving $v$ a color different from the colors on $w$ and $x$, $u$ the color on $w$, and all other pendants a color that differs from $v$. Therefore there are exactly $(q-2)(q-1)^{\ell-1}\chi_s(T_{n-\ell-1};q)$ colorings of $T_n$ where the colors on $u$ and $w$ are the same.

The preceding arguments show
\[
\chi_s(T_n;q) = (q-2)\chi_s(T_{n-1};q) + (q-2)(q-1)^{\ell-1}\chi_s(T_{n-\ell-1};q).
\]
Now any star coloring of $P_{n-2}$ (with vertices $v_1,\ldots,v_{n-2}$) is obtained by starting with a star coloring of $P_{n-\ell-1}$ (vertices $v_1,\ldots,v_{n-\ell-1}$), and iteratively coloring the remaining vertices as a star coloring. Since a star coloring is a proper coloring, this implies that there are at most $q-1$ choices for a color on each of $v_{n-\ell},\ldots,v_{n-2}$, which gives $(q-1)^{\ell-1} \chi_s(P_{n-\ell-1};q) \geq \chi_s(P_{n-2};q)$.

Therefore, we see that
\begin{eqnarray*}
\chi_s(T_n;q) &=& (q-2)\chi_s(T_{n-1};q) + (q-2)(q-1)^{\ell-1}\chi_s(T_{n-\ell-1};q)\\
&\geq& (q-2)\chi_s(T_{n-1};q)+(q-2)(q-1)^{\ell-1}\chi_s(P_{n-\ell-1};q)\\
&\geq& (q-2)\chi_{s}(P_{n-1};q) + (q-2)\chi_{s}(P_{n-2};q)\\
&=&\chi_s(P_n;q).
\end{eqnarray*}
This proves the inequality, and so we move now to the characterization of equality. We first argue that to have equality requires $\ell=1$, which will follow from showing that $(q-1)^{\ell-1}\chi_s(P_{n-\ell-1};q)>\chi_s(P_{n-2};q)$ for $\ell>1$.  
Given a star coloring of $v_1,\ldots,v_{n-\ell-1}$, we use an iterative proper coloring of $v_{n-\ell},\ldots,v_{n-2}$ for the non-strict inequality.  But if $\ell>1$, one such coloring will have the same colors on $v_{n-\ell-1}$ and $v_{n-\ell-3}$ and will choose a color on $v_{n-\ell}$ that appears on $v_{n-\ell-2}$.
This creates a 2-colored $P_4$ in $P_{n-2}$, and so is a proper coloring extension that is not a star coloring of $P_{n-2}$.  Therefore $(q-1)^{\ell-1}\chi_s(P_{n-\ell-1};q)>\chi_s(P_{n-2};q)$ for $\ell>1$.  

Finally, if $\ell=1$, then by induction we have equality only when $T_{n-1}$ and $T_{n-2}$ are $P_{n-1}$ and $P_{n-2}$. respectively, which implies that $T_n=P_n$.
\end{proof}

\subsection{Existence Homomorphisms --- Proof of Theorem \ref{thm-xhom}}

Here we prove the results about existence homomorphisms to $E_q^\circ$. Recall that an existence homomorphism from $T$ to $E_{q}^\circ$ is a vertex coloring of $T$ so that no color class contains an isolated vertex. 

\begin{proof}[Proof of Theorem \ref{thm-xhom}]
For minimizing, notice that each tree can be monochromatically colored.  For $S_n$, these are the only possible colorings, since a leaf must have the same color as its neighbor.  And if a tree $T$ has a path $P_4$, then coloring one leaf and its neighbor $v$ with color $1$ and the other leaf and its neighbor $w$ with color $2$, and monochromatically coloring the vertices in each component of $T-vw$, we see that every $T \neq S_n$ has an existence homomorphism to $E_q^\circ$ (with $q \geq 2$) that is not a monochromatic coloring of $T$.

For maximizing, we again begin by giving a recursive definition for $\xhom(P_n,E_q^\circ)$.  As a leaf must have the same color as its neighbor, we have that $\xhom(P_2,E_q^\circ) = q$ and $\xhom(P_3,E_q^\circ) = q$. Let $P_n$ have vertices $v_1,v_2,\ldots,v_n$, and consider an existence homomorphism from $P_n$ to $E_{q}^\circ$. 
If the colors on $v_2$ and $v_3$ are the same, then this color must also be the color on $v_1$, and we delete $v_1$ and obtain an existence homomorphism from the remaining path to $E_{q}^\circ$. If the colors on $v_2$ and $v_3$ differ, then we delete $v_1$ and $v_2$ to obtain an existence homomorphism from the remaining path to $E_q^\circ$. Since in this latter case the color on $v_1$ and $v_2$ has $(q-1)$ possibilities, this gives the recurrence
\[
\xhom(P_n,E_q^\circ) = \xhom(P_{n-1},E_q^\circ) +(q-1)\xhom(P_{n-2},E_q^\circ) \qquad (n \geq 4).
\]

Given $\ell$ with $2 \leq \ell \leq n-3$ we can condition on whether or not $v_{\ell+1}$ and $v_{\ell+2}$ have the same color. By monochromatically coloring $v_1,\ldots,v_\ell$, we see that
\begin{equation}\label{eq-xhom}
\xhom(P_n,E_q^\circ) > \xhom(P_{n-\ell},E_q^\circ)+(q-1)\xhom(P_{n-\ell-1},E_q^\circ),
\end{equation}
with strict inequality coming from a coloring of $P_n$ in which $v_2$ and $v_3$ have distinct colors (in the case where $v_{\ell+1}$ and $v_{\ell+2}$ have the same color).

Now we induct on $n$ to show that $P_n$ is the unique $n$-vertex tree $T_n$ maximizing $\xhom(T_n,E_q^\circ)$. Notice that by the minimizing result, we may assume that $T_n \neq S_n$ as $\xhom(S_n,E_q^\circ) = q < \xhom(P_n,E_q^\circ)$ for $n \geq 4$.
Consider a maximum length path in $T_n$ and let $v$ be a penultimate vertex on this path. Denote the leaves adjacent to $v$ by $u_1,\ldots,u_\ell$ (so that $\ell \leq n-3$) and let $w$ be a non-pendant neighbor of $v$.  Then we enumerate the existence homomorphisms to $E_q^\circ$ based on whether or not $v$ and $w$ have the same color. Note that the color on $u_1,\ldots,u_{\ell}$ must be the same as the color on $v$.

If $v$ and $w$ have the same color, delete $u_1,\ldots,u_\ell$ and obtain a tree $T_{n-\ell}$ that has an existence homomorphism to $E_{q}^\circ$. Each existence homomorphism of $T_{n-\ell}$ comes from one existence homomorphism of $T_{n}$ of this type and vice versa, and so there are  $\xhom(T_{n-\ell},E_{q}^{\circ})$ existence homomorphisms of $T_n$ so that $v$ and $w$ have the same color.

When $v$ and $w$ do not have the same color, we delete  $u_1,\ldots,u_\ell$ and $v$. This produces a tree $T_{n-\ell-1}$ that has an existence homomorphism to $E_q^\circ$. Since the color of the deleted vertices must differ from the color on $w$,  
there are  $(q-1)\xhom(T_{n-\ell-1},E_q^\circ)$ existence homomorphisms from $T_n$ to $E_q^\circ$ where $v$ and $w$ do not have the same color.

By the remarks above, induction, and (\ref{eq-xhom})  we have
\begin{eqnarray*}
\xhom(T_n,E_q^\circ) &=& \xhom(T_{n-\ell},E_q^\circ) + (q-1)\xhom(T_{n-\ell},E_q^\circ)\\
&\leq& \xhom(P_{n-\ell},E_q^\circ) + (q-1)\xhom(P_{n-\ell-1},E_q^\circ)\\
&\leq& \xhom(P_n,E_q^\circ).
\end{eqnarray*}
For equality, by (\ref{eq-xhom}) we have $\ell=1$.  Given this, by induction we have $T_{n-1}=P_{n-1}$ and $T_{n-2} = P_{n-2}$, which implies that $T_n=P_n$.
\end{proof}



\begin{thebibliography}{99}

\bibitem{Abel}
M. Abellanas, P. Bose, J. Garcia, F. Hurtado, M. Nicolas, and P. A. Ramos, On properties of higher order delaunay graphs with applications,  in: {\em Proc. 21st European Workshop on Computational Geometry}, (2005), 9--11.

\bibitem{CKP}
P. Cheilaris, B. Keszegh, D. P\'alv\"olgyi,  Unique-maximum and conflict-free coloring for hypergraphs and tree graphs, {\em SIAM J. Discrete Math.} {\bf 27(4)} (2013), 1775--1787.

\bibitem{CheilarisToth}
P. Cheilaris and G. T\'oth, Graph unique-maximum and conflict-free colorings, {\em J. Discrete Algorithms} {\bf 9(3)} (2011), 241--251.

\bibitem{Cutler}
J. Cutler, Coloring graphs with graphs: a survey, {\em Graph Theory Notes N.Y.} {\bf 63} (2012), 7-16.  

\bibitem{CutlerRadcliffe}
J. Cutler and A.J. Radcliffe, Counting dominating sets and related structures in graphs, {\em Discrete Math.} {\bf 339} (2016), 1593–-1599.

\bibitem{CsikvariLin}
P. Csikv\'{a}ri and Z. Lin, Graph homomorphisms between trees, {\em Electronic J. Combinatorics} {\bf 21(4)} (2014), \#P4.9. 

\bibitem{EngbersGalvin}
J. Engbers and D. Galvin, Extremal $H$-colorings of trees and 2-connected graphs, {\em J. Comb. Theory Ser. B} {\bf 122} (2017), 800--814.


\bibitem{NP}
G. Even, Z. Lotker, D. Ron, and S. Smorodinsky, Conflict-free colorings of simple geometric regions with applications to frequency assignment in cellular networks, {\em SIAM J. Comput.} {\bf 33} (2003), 94--136.

\bibitem{FRR}
G. Fertin, A. Raspaud, and B. Reed, Star Coloring of Graphs, {\em J. Graph Theory} {\bf 47(3)} (2004), 163--182.


\bibitem{Grunbaum}
B. Gr\"unbaum, Acyclic colorings of planar graphs, {\em Israel J. Math.} {\bf 14(3)} (1973), 390--408.

\bibitem{ProdingerTichy}
H. Prodinger and R. Tichy, Fibonacci numbers of graphs, {\em The Fibonacci Quart.} {\bf 20} (1982), 16--21.

\bibitem{Sidorenko}
A. Sidorenko, A partially ordered set of functionals corresponding to graphs, {\em Discrete Math.} {\bf 131} (1994), 263--277.

\bibitem{SmorThesis}
S. Smorodinsky, Combinatorial Problems in Computational Geometry, PhD thesis,
School of Computer Science, Tel-Aviv University, 2003.

\bibitem{Smor}
S. Smorodinsky, Conflict-free coloring and its applications,  in: I. B\'ar\'any et al. (eds.), Geometry
-- Intuitive, Discrete, and Convex, Bolyai Soc. Math. Stud., Springer, Vol 24, 2013, 331--389.

\bibitem{Zhao}
Y. Zhao, Extremal regular graphs: independent sets and graph homomorphisms, 
{\em Amer. Math. Monthly} {\bf 124} (2017), 827--843. 


\end{thebibliography}
\end{document}